\documentclass[12 pt]{amsart}
\usepackage{graphicx} 
\usepackage{booktabs}

\usepackage{comment}

\usepackage[dvipsnames]{xcolor}
\usepackage{url}

\usepackage{tikz,tikz-cd,varwidth, amssymb}
\usepackage[hidelinks]{hyperref}
\usetikzlibrary{calc, decorations}
\usepackage[text={6 in, 7.5 in},centering]{geometry}
\usepackage{enumerate, enumitem, stmaryrd}
\usepackage[all,cmtip]{xy}

\usepackage{mathdots}

\newcommand{\CC}{\mathbb{C}}

\newcommand{\FF}{\mathbb{F}}

\newcommand{\HH}{\mathbb{H}}

\newcommand{\PP}{\mathbb{P}}
\newcommand{\QQ}{\mathbb{Q}}

\newcommand{\ZZ}{\mathbb{Z}}

\newcommand{\cA}{\mathcal{A}}

\newcommand{\cM}{\mathcal{M}}

\newcommand{\s}{\mathsf{S}}

\newcommand{\ch}{\operatorname{char}}
\newcommand{\cyc}{\operatorname{cyc}}

\newcommand{\Gal}{\operatorname{Gal}}

\newcommand{\Mbar}{\overline{\mathcal{M}}}

\newcommand{\Sp}{\operatorname{Sp}}
\newcommand{\Spec}{\operatorname{Spec}}
\newcommand{\Sym}{\operatorname{Sym}}

\newcommand{\SL}{\mathrm{SL}}

\newtheorem{Theorem}{Theorem}

\newtheorem{theorem}{Theorem}[section]
\newtheorem{proposition}[theorem]{Proposition}

\newtheorem{problem}[theorem]{Problem}

\newtheorem*{HC}{Hodge Conjecture}
\newtheorem*{GHC}{Generalized Hodge Conjecture}
\newtheorem*{TC}{Tate Conjecture}
\newtheorem*{GTC}{Generalized Tate Conjecture}

\newtheorem*{SSC}{Semisimplicity Conjecture}
\newtheorem*{Taut20}{Taut 20 Conjecture}
\newtheorem*{Taut10}{Taut 10 Conjecture}

\newtheorem*{Chow5}{Chow 5 Conjecture}
\newtheorem*{Chow10}{Chow 10 Conjecture}

\theoremstyle{definition}
\newtheorem{remark}[theorem]{Remark}

\title{On the Hodge and Tate conjectures for moduli spaces of curves}
\author{Sam Payne}

\begin{document}

\begin{abstract}
We survey recent progress on the cohomology of moduli spaces of stable curves through the lens of the Hodge and Tate conjectures, especially their generalized coniveau forms, which relate Hodge structures and $\ell$-adic Galois representations on cohomology to algebraic cycles. We explain how the inductive structure of the boundary stratification verifies these conjectures in a surprisingly wide range of cases, describe the guiding inspiration from arithmetic, and discuss open problems and directions for future research.
\end{abstract}

\maketitle

\section{Introduction}

The Hodge conjecture and Tate conjecture are two central threads in a web of interrelated conjectures relating cohomology theories for algebraic varieties to the properties of algebraic cycles and categories of motives. These conjectures are wide open in general, and known in only a few special cases. Progress has typically involved the study of varieties that move in nontrivial families, such as abelian varieties and K3 surfaces, using degeneration arguments and the properties of special varieties with extra endomorphisms, such as CM abelian varieties.

Recent progress in understanding the cohomology of moduli spaces of stable curves opens a new avenue for the investigation of these conjectures. These moduli spaces do not deform in families, but instead constitute an infinite discrete collection of spaces whose cohomology groups contain rich and complicated Hodge structures and Galois representations. The geometric role usually played by variation in families is here played by the tautological gluing and forgetful morphisms, together with the boundary stratification and weight arguments. 

The recent cohomology computations are discussed in more detail in Larson's survey article \cite{Larson26}. The purpose of this article is to explain their consequences for the Hodge and Tate conjectures, especially in their generalized coniveau forms.

\subsection{Main statements} Throughout, $\Mbar_{g,n}$ denotes the smooth and proper Deligne--Mumford moduli stack of stable $n$-pointed genus $g$ curves, and $\cM_{g,n} \subset \Mbar_{g,n}$ is the open substack of smooth curves. Except where explicitly stated otherwise, cohomology is rational singular cohomology over $\CC$, or $\ell$-adic cohomology after base change to a separable closure.

The inductive structure of the stratification by topological types of stable curves, combined with topological properties of the locally closed strata %
and some basic mixed Hodge theory, allow for the verification of the generalized Hodge and Tate conjectures in a surprisingly wide range of cases. 

\begin{Theorem} \label{thm:a}
The Hodge conjecture holds on $\Mbar_{g,n}$ for:
\begin{enumerate}
    \item algebraic cycles of codimension 1 and 2, for all $g$ and $n$,
    \item algebraic cycles of codimension 3, for $g \not \in \{7, 8, 9\}$, and for all $n$,
    \item algebraic cycles of dimension less than or equal to $7$, for all $g$ and $n$.
\end{enumerate}
The Tate Conjecture holds on $\Mbar_{g,n}$, over any finitely generated field, in the same range of cases. 
\end{Theorem}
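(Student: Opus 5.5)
The plan is to show that in each listed range the relevant cohomology of $\Mbar_{g,n}$ is spanned by algebraic cycle classes, in fact by tautological classes. Then the Hodge classes, and likewise the Tate classes, are exactly the algebraic ones. Concretely, I would prove that in these degrees the cohomology is pure Tate: $H^{2k}(\Mbar_{g,n})$ is isomorphic to a sum of copies of $\QQ(-k)$ spanned by classes of algebraic cycles, and $H^{2k+1}$ vanishes or is irrelevant. Once this holds, the Hodge and Tate conjectures in that degree follow formally. Over a finitely generated field $K$, the span of cycle classes is Galois-stable and equals the whole group, so every Tate class is algebraic. Part (3) reduces to the low-codimension statements by Poincar\'e duality, since $\Mbar_{g,n}$ is smooth and proper. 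Cycles of dimension $d \le 7$ live in $H^{2d}$, which is dual to $H^{2\dim - 2d}$, so it suffices to show that $H^{2d}$ is spanned by algebraic classes for $d \le 7$. Dually, the classes of dimension-$d$ cycles in the top degrees are handled the same way.

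The key tool is the weight/stratification argument. Consider the open–closed decomposition $\cM_{g,n} \subset \Mbar_{g,n}$ with boundary $\partial\Mbar_{g,n}$, which is the image of the gluing maps from the normalizations $\widetilde{\partial} = \bigsqcup \Mbar_{g_1,n_1+1}\times\Mbar_{g_2,n_2+1} \sqcup \Mbar_{g-1,n+2}$. The exact sequence in compactly supported cohomology, together with purity of $H^k(\Mbar_{g,n})$, gives
\[ W_{k-1}H^k_c(\cM_{g,n}) \to H^k(\Mbar_{g,n}) \to H^k(\widetilde{\partial}). \]
Hence $H^k(\Mbar_{g,n})$ is an extension of a subspace of $H^k(\widetilde{\partial})$ by $\operatorname{gr}^W_k H^k_c(\cM_{g,n})$. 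By induction on $(g,n)$, the boundary contributions are pure Tate and algebraic: the K\"unneth formula holds for the products in the relevant degrees, and pushforward along the gluing maps preserves algebraicity. The restriction pieces are handled by the pullback/Gysin compatibility, which gives a lift to algebraic classes.

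So the crux is the vanishing, or tautological nature, of $\operatorname{gr}^W_k H^k_c(\cM_{g,n})$, equivalently $W_k$ of $H^{2d-k}(\cM_{g,n})$ by duality, for small $k$ (and $k$ near the top for part (3)). For small $k$ I would invoke Harer's vanishing and stability results together with Madsen–Weiss. In the stable range, $H^k(\cM_{g,n})$ is generated by $\kappa$ and $\psi$ classes, which are pure of weight $k$ and algebraic. Below that range, the virtual cohomological dimension of $\cM_{g,n}$ bounds which $H^k_c$ can be nonzero. This handles codimension $1$ and $2$ uniformly. For dimension $\le 7$, the input is the known vanishing of the pure weight part of $H^k_c(\cM_{g,n})$ for small $k$, namely the result that $H^k(\Mbar_{g,n})$ is tautological for $k \le 13$, $k \ne 11$ (Bergström–Faber–Payne, Canning–Larson–Payne). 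The point here is that $H^{11}$ is odd and therefore irrelevant to cycles, and $H^{2d}$ for $d \le 6$ together with $H^{14}$ must be checked directly. For codimension $3$, i.e. $H^6$, the stable range covers large $g$ and the low genera are covered by direct computation. The excluded genera $7,8,9$ are exactly where neither Harer stability nor the explicit low-genus computations applies.

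I expect the main obstacle to be these intermediate cases: codimension $3$ at moderate genus, and $H^{14}$ for dimension $7$. There one must rule out non-tautological pure weight classes in $H^*_c(\cM_{g,n})$, for instance those coming from cusp forms (weight $12$ for $\Delta$ in degree $11$, with products appearing in $H^{22}$). My first approach would be the Getzler–Kapranov/graph-complex Euler characteristic bookkeeping, combined with the $\mathbb{S}_n$-equivariant reduction to small $n$. These show that only $\mathsf{L}^k$-type pieces occur in weight $k \le 14$ except in $k=11$.
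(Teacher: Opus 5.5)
Your treatment of codimensions $1$, $2$, $3$ follows the paper's route: tautological generation of $H^2$, $H^4$, $H^6$ by the Arbarello--Cornalba induction, then comparison to $\ell$-adic cohomology. Part (3), however, has a genuine gap.

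Cycles of dimension $e \le 7$ have classes in $H^{2D-2e}(\Mbar_{g,n})$ with $D = 3g-3+n$, not in $H^{2e}$. Poincar\'e duality only identifies $H^{2D-2e}$ with the dual of $H^{2e}$; it does not carry algebraic classes to algebraic classes. What does is hard Lefschetz with the tautological ample class, and only in the direction low $\Rightarrow$ high; the converse would need the Lefschetz standard conjecture. So your reduction requires $H^{2e}(\Mbar_{g,n})$ to be spanned by algebraic classes for every $e \le 7$ and all $g,n$. That is not known. It includes $H^6$ for $g \in \{7,8,9\}$, which part (2) of the very statement you are proving excludes. It also includes $H^8,\dots,H^{14}$, which lie in the range of the open Taut 20 conjecture. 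The result you quote, that $H^k(\Mbar_{g,n})$ is tautological for $k \le 13$, $k \neq 11$, does not exist. It is false for $k = 13$: $H^{13}(\Mbar_{1,12}) \cong \QQ^{22}$ has Hodge types $(12,1)$ and $(1,12)$.

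Behind this is a conflation of Tate type with algebraicity. What is known for even $k \le 14$ is that $H^k(\Mbar_{g,n})$ is pure Tate, i.e.\ only $\mathsf{L}^{k/2}$ occurs. Deducing that such classes are algebraic is exactly the Hodge conjecture. For the same reason, your proposed base-case tool (Getzler--Kapranov or graph-complex Euler characteristic bookkeeping) can at best show that only $\mathsf{L}$-type pieces occur. It can never show that $W_kH^k(\cM_{g,n})$ is spanned by restrictions of algebraic cycles. The base cases need genuinely cycle-theoretic input, such as the Chow--K\"unneth generation property.

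The missing idea for part (3) is to attack the high-degree groups directly. Run the Arbarello--Cornalba induction, in the semi-tautological extension framework, on $H^{2D-k}$ (equivalently on homology $H_k$), and show it is tautological for even $k \le 14$; this is \cite[Theorem~1.5]{CLP24}. This is a priori easier, since hard Lefschetz gives $RH^k \hookrightarrow RH_k$, and its base cases are accessible. That asymmetry is why the theorem reaches dimension $7$ for all $g$, while codimension stops at $3$ with exceptions.

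Finally, for the Tate conjecture in positive characteristic, your claim that cycle classes span the whole $\ell$-adic group needs justification. The tautological cycles are defined over $\ZZ$. Since $\Mbar_{g,n}$ is smooth and proper over $\ZZ$, the sheaf $R^{2k}f_*\QQ_\ell(k)$ is lisse on $\Spec \ZZ[1/\ell]$. Hence spanning in characteristic $0$ propagates to every geometric fiber.
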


\noindent The theorem is proved by showing that the relevant cohomology groups are generated by tautological classes. Throughout, we assume that $g$ and $n$ are nonnegative integers such that $2g-2+n > 0$, so the moduli space of stable curves $\Mbar_{g,n}$ is nonempty.

The Hodge conjecture and Tate conjecture pertain to the subspaces of even cohomology groups spanned by algebraic cycle classes. They have generalizations which apply to subspaces of cohomology groups, in both even and odd degrees, which are supported on an algebraic subset of specified codimension.  The classical Hodge and Tate conjectures are the respective special cases where the cohomological degree is twice the codimension of the algebraic subset. 

Theorem~\ref{thm:a} concerns the image of the cycle class map in even degree.  Theorems~\ref{thm:b} and \ref{thm:c} concern substructures rather than individual cohomology classes; they address when Hodge and Galois substructures are supported on algebraic subsets of the predicted codimension.  See the conjecture statements in Section~\ref{sec:HodgeTate}. Fix $d := 3g-3+n$.

\begin{Theorem} \label{thm:b}
The generalized Hodge conjecture holds for $H^k(\Mbar_{g,n})$, as follows:
\begin{enumerate}
    \item for even $k \leq 4$, for all $g$ and $n$;
    \item for $k = 6$, for $g \not \in \{7, 8, 9\}$, and for all $n$;
    \item for odd $k \leq 13$, for all $g$ and $n$;
    \item for $k = 2d - j$, for $0 \leq j \leq 15$, and for all $g$ and $n$.
\end{enumerate}

\end{Theorem}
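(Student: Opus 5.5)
The plan is to deduce each case from structural results on the cohomology of $\Mbar_{g,n}$ in low degree, combined with Poincar\'e duality for the high-degree cases. Recall the shape of the generalized Hodge conjecture: if a sub-Hodge structure $V \subset H^k(\Mbar_{g,n})$ has coniveau at least $c$ (that is, $V \otimes \CC$ lies in $\bigoplus_{p \geq c} H^{p,k-p}$), then $V$ should be supported on a closed subset of codimension $c$.

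\textbf{Even degrees $k \leq 6$.} Here I will invoke the fact, proved via the inductive boundary argument behind Theorem~\ref{thm:a}, that $H^k(\Mbar_{g,n})$ is spanned by tautological classes:
\begin{itemize}
\item for $k=2,4$ and all $g,n$;
\item for $k=6$ when $g \notin \{7,8,9\}$.
\end{itemize}
Since the whole group is spanned by algebraic cycle classes of codimension $k/2$, it is pure of Hodge type $(k/2,k/2)$. So any sub-Hodge structure $V$ has coniveau exactly $k/2$ when nonzero. Such a $V$ is spanned by classes of finitely many cycles, so it is supported on their union, which has codimension $k/2$. The coniveau-$0$ case is trivial, so this settles (1) and (2).

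\textbf{Odd degrees $k \leq 13$.} I will use the known vanishing and structure results for odd cohomology:
\begin{itemize}
\item $H^k(\Mbar_{g,n})=0$ for odd $k \leq 9$;
\item $H^{11}$ is built from $H^{11}(\Mbar_{1,11})$ via pushforward along gluing maps $\Mbar_{1,11} \times \Mbar_{g-1,n-10}\to\Mbar_{g,n}$ (or similar) and pullback along forgetful maps;
\item $H^{13}$ is similarly built from $\Mbar_{1,12}$, $\Mbar_{2,10}$, etc., and from $H^{11}(\Mbar_{1,11})\otimes H^2$ of the other factor.
\end{itemize}
The source pieces $H^{11}(\Mbar_{1,11})$ (and the analogous ones in degree $13$) are spanned by the weight-$12$ cusp form motive, of Hodge type $(11,0)+(0,11)$, so coniveau $0$. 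A Tate-twisted piece pushed forward from a boundary stratum of codimension $c$ has coniveau $c$ and is supported on that stratum. For an arbitrary sub-Hodge structure $V$ I will split $H^k$ canonically into isotypic pieces indexed by the coniveau of the source. Pieces pushed from codimension $c$ strata are supported in codimension $c$; coniveau-$0$ pieces need nothing. Because the Hodge types occurring are distinct for distinct $c$, $V$ decomposes compatibly, and $V$ of coniveau $\ge c$ lies in the span of pieces with source codimension $\ge c$. Pushforwards of $H^2$ factors (classes $\omega\otimes$ divisor) need the divisor supports, handled identically.

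\textbf{Near top degree $k=2d-j$, $j\leq 15$.} By Poincar\'e duality, $H^{2d-j}\cong H^j(d)^\vee$, i.e.\ $H^{2d-j} \cong H^j(-(d-j))$ up to polarization. So every sub-Hodge structure there is a Tate twist of one in $H^j$, with coniveau raised by $d-j$. I will show that $H^j(\Mbar_{g,n})$ for $j \le 15$ is spanned by classes pushed forward from products of boundary strata with known cohomology. Then the dual classes in $H^{2d-j}$ are pullback-dual, that is, pushforwards via Gysin maps from strata $Z$ of dimension $\le$ the needed bound.

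The step I expect to be the main obstacle is this top-degree case. One needs the decomposition of $H^j$ for $j\le 15$ (including the even $j\le 14$, not just where Theorem~\ref{thm:a} applies) into Tate twists of pieces supported on strata, and one needs to control the supports after dualizing. For this I would use the generalized coniveau bound from the boundary: $H^j(\Mbar_{g,n})$ injects via weight arguments into $\bigoplus H^{j}$ of normalized boundary strata or comes from $W_jH^j(\cM_{g,n})$, which vanishes in the relevant range.
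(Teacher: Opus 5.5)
\textbf{Cases (1)--(3).} Your even-degree argument for (1)--(2) is exactly the paper's. Your odd-degree argument for (3) has the right logic, but the inputs are misdescribed. A Gysin push-forward along a gluing map raises degree by $2$, so $H^{11}(\Mbar_{g,n})$ contains no boundary push-forwards at all. It equals $\bigoplus_A \pi_A^*H^{11}(\Mbar_{1,11})$ for $g=1$ and is zero otherwise, so all of it has coniveau $0$ and the conjecture is vacuous there. By contrast, $H^{13}(\Mbar_{g,n})$ is the image of $\bigoplus_A\xi_{A*}$ on $H^{11}$ of the boundary divisors \cite[Theorem~1.7]{CLPW24}. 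So it is entirely of type $(12,1)+(1,12)$, with no coniveau-$0$ part. Push-forwards of $H^{11}\otimes H^2$ classes land in $H^{15}$, not $H^{13}$.

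\textbf{The gap is in case (4).} Your plan is to control $H^j(\Mbar_{g,n})$ for all $j\le 15$ and then transfer the result to $H^{2d-j}$. Both halves fail.
\begin{enumerate}
\item The low-degree input is not available. For even $j$ it amounts to $H^j$ being tautological, which is known only for $j\le 4$ and for $j=6$ with $g\notin\{7,8,9\}$; the remaining even cases are part of the open Taut 20 conjecture. For $j=15$ it is the open generalized Hodge conjecture for $H^{15}$.
\item Poincar\'e duality does not transport supports. The transpose of a Gysin map $\iota_*$ is the restriction $\iota^*$. So ``$H^j$ is spanned by push-forwards from $Z$'' dualizes to ``$H^{2d-j}(\Mbar_{g,n})\to H^{2d-j}(\widetilde Z)$ is injective,'' which says nothing about support in small dimension. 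The abstract isomorphism $H^{2d-j}\cong H^j(-(d-j))$ carries no geometric information either.
\end{enumerate}
What does transport supports is hard Lefschetz with the algebraic ample class, since $\iota_*\beta\cdot\omega^{d-j}=\iota_*(\beta\cdot\iota^*\omega^{d-j})$. But that only recovers the high-degree cases whose low-degree counterparts are already known, not the full range $j\le 15$.

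\textbf{What the paper does instead.} It uses results proved directly in high degree. $H^{2d-j}$ is tautological for even $j\le 14$ \cite[Theorem~1.5]{CLP24}. For odd $j\le 15$, $H^{2d-j}$ lies in the semi-tautological extension generated by $H^{11}(\Mbar_{1,11})$ and $H^{15}(\Mbar_{1,15})$ \cite[Theorems~1.6--1.7]{CLP24}. Hence every class is a Gysin push-forward, from a boundary stratum, of an $\s_{12}$- or $\s_{16}$-class times Chern classes of tautological bundles. Representing those Chern classes by algebraic cycles on the strata gives supports of the predicted codimension.

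Your final paragraph points toward the engine behind those results, but with two problems.
\begin{enumerate}
\item In degree $2d-j$ the relevant sequence is \eqref{eq:exc1}, and its obstruction is $W_{2d-j}H^{2d-j}(\cM_{g,n})$, not $W_jH^j(\cM_{g,n})$.
\item This obstruction vanishes only outside finitely many base cases $(g,n)$. In those base cases $\s_{12}$, $\s_{16}$, and nonzero pure classes of Tate type actually occur. Handling them, via point counts and Chow--K\"unneth generation, is exactly the content that your phrase ``vanishes in the relevant range'' skips.
\end{enumerate}
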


\noindent As mentioned above, the even degree cases are proved by showing that these cohomology groups are generated by tautological classes. The arguments for odd degrees are similar in spirit; see the examples in Section~\ref{sec:odd}.

\begin{Theorem} \label{thm:c}
Let $L$ be a finitely generated field. We assume $L$ has either characteristic zero, or positive characteristic $p \neq \ell$ such that $p$ is ordinary for $H^{k}(\Mbar_{g,n})$. Then the generalized Tate conjecture holds for $H^k((\Mbar_{g,n})_{L^s}, \QQ_\ell)$, for $g$, $n$ and $k$ as above (Theorem~\ref{thm:b}, cases (1)--(4)).
\end{Theorem}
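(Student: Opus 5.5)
The idea is to deduce Theorem~\ref{thm:c} from the same structural inputs as Theorem~\ref{thm:b}, running each step in $\ell$-adic cohomology instead of Hodge theory. The proof of Theorem~\ref{thm:b} should in each case realize $H^k(\Mbar_{g,n})$ as an explicit quotient of Gysin pushforwards. These come from boundary strata $\Mbar_{\Gamma} = \prod_v \Mbar_{g_v,n_v}$ of codimension $c$ of classes in $H^{k-2c}(\widetilde{\Mbar}_\Gamma)$. For even $k$ the pushed-forward classes are fundamental classes, giving tautological classes. For odd $k$ they are pure pieces $H^{11}(\Mbar_{1,11})$-type classes, or generally $H^{k'}$ of smaller moduli spaces with $k'$ in the known range. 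The middle-dimension-complementary cases $k = 2d-j$ follow by Poincar\'e duality and hard Lefschetz from small-degree knowledge.

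All of these maps are defined over $\ZZ$, and they are compatible with the comparison isomorphisms between singular and $\ell$-adic cohomology. So first I will note that the same surjection
\[\bigoplus_\Gamma H^{k-2c_\Gamma}(\Mbar_\Gamma)(-c_\Gamma) \twoheadrightarrow H^k(\Mbar_{g,n})\]
holds Galois-equivariantly over $L^s$. In characteristic zero this is immediate from the comparison theorem and the fact that surjectivity is a dimension statement. In characteristic $p$ I will use smooth proper base change for the stack over $\ZZ$, or more precisely over $\ZZ[1/\ell]$. The $\ell$-adic cohomology of the geometric fibers is then locally constant, and specialization commutes with Gysin maps.

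The generalized Tate conjecture predicts that a Galois subrepresentation $V \subset H^k$ on which Galois acts, after a finite extension, through $\chi_{\cyc}^{-c}$ times an integral-weight representation of appropriate Hodge--Tate (or Newton/Frobenius-divisibility) type is supported in codimension $c$. With the surjection above, any such $V$ is the image of a subrepresentation of the source. This reduces the question to the generalized Tate conjecture for the factors $H^{k-2c}(\Mbar_\Gamma)$, which are of lower degree or lower dimension. By the K\"unneth formula, these factors are products of the pieces already controlled: $H^0$, $H^2$, \dots, tautological and hence Tate-type pieces, and the odd pieces such as $H^{11}(\Mbar_{1,11})$, which is attached to the weight-$12$ cusp form $\Delta$. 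For those pieces I will need the \emph{converse} direction: that the representation has coniveau exactly as predicted, i.e. it is not divisible by a further Tate twist. For characteristic zero this follows from Hodge--Tate weights (Faltings), since the Hodge numbers of $H^{11}(\Mbar_{1,11})$ are $h^{11,0}=h^{0,11}=1$.

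The main obstacle is positive characteristic. There, Hodge--Tate weights are unavailable, and one must rule out that, e.g., the $\Delta$-motive piece becomes more divisible by $p$ under Frobenius. This is exactly where the ordinarity hypothesis enters. If $p$ is ordinary for $H^k$, the Newton polygon equals the Hodge polygon. The Frobenius eigenvalues on each irreducible summand then have $p$-adic valuations matching the Hodge numbers, so the coniveau as measured by Frobenius slopes agrees with the Hodge coniveau. I would make this precise by passing to a finite field of definition, using Tate-twist characterization via slopes. I would then check summand by summand that the predicted coniveau of each Galois piece equals the codimension of the stratum supporting it. The final check is that the tautological (Tate) pieces and the $\Delta$-type pieces, which has $\tau(p)\not\equiv 0 \pmod p$ for ordinary $p$, are pure of the right slopes.
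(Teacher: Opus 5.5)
Your mechanism is essentially the paper's: Galois-equivariant Gysin surjections built from maps defined over $\ZZ$, transported to $L^s$ by comparison and smooth proper base change over $\ZZ[1/\ell]$, with ordinarity fixing the Tate coniveau of the $\s_{12}$ pieces. It handles cases (1)--(3), modulo the smaller points below.

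\textbf{The gap is case (4).} Hard Lefschetz only transports to $H^{2d-j}$ what is already known for $H^j$. Since $\omega^{d-j}$ is represented by a general complete intersection, supports and coniveau both shift by $d-j$. This does settle $j\in\{0,2,4\}$, $j=6$ with $g\notin\{7,8,9\}$, and odd $j\le 13$.

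It gives nothing for even $j\in\{8,10,12,14\}$, for $j=6$ with $g\in\{7,8,9\}$, or for $j=15$, because the low-degree input is missing there. $H^j(\Mbar_{g,n})$ is not known to be tautological for these even $j$, and the generalized Hodge conjecture for $H^{15}$ is one of the paper's open problems. The paper stresses that its high-degree results go well beyond what hard Lefschetz yields.

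The paper instead uses independent high-degree theorems, \cite[Theorems~1.5--1.7]{CLP24}. First, $H^{2d-j}$ is tautological for even $j\le 14$. Second, for odd $j\le 15$ it lies in the semi-tautological extension generated by $H^{11}(\Mbar_{1,11})$ and $H^{15}(\Mbar_{1,15})$. So every class is a Gysin push-forward from a boundary stratum of an $\s_{12}$- or $\s_{16}$-class, cupped with Chern classes of tautological bundles. Representing those Chern classes by algebraic cycles on the strata, all defined over $\ZZ$, gives supports of the predicted codimension. This brings $\s_{16}$ into degree $2d-15$, so your final slope check must also require $p\nmid a_p(\Delta_{16})$, not only $p\nmid\tau(p)$.

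\textbf{Two smaller points.} First, Hodge--Tate weights do not show that $\s_{12}$ has Tate coniveau $0$ in characteristic zero. Coniveau here means divisibility of Frobenius eigenvalues at good places. The Hodge polygon only bounds the Newton polygon from below, which is the wrong direction: non-ordinary crystalline representations with Hodge--Tate weights $\{0,11\}$ exist, e.g.\ $\s_{12}$ at $p=2$. What rules out $V(1)$ being effective is a good place over a prime $p\neq\ell$ with $p\nmid\tau(p)$, such as $p=11$ or $p=13$, where one eigenvalue is a $p$-adic unit. That is the same slope computation you already use in characteristic $p$.

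Second, you never address semisimplicity, which the paper counts as part of the statement. It follows from your surjections because the sources are semisimple. Semisimplicity is also what justifies your reduction to the source. The bare preimage $f^{-1}(V)$ contains $\ker f$, and it can have smaller coniveau than $V$ when different twists occur, as in degree $2d-15$. So you must replace $V$ by an isomorphic complement inside the source before invoking the factors.
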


\noindent In all of the cases covered by Theorems~\ref{thm:b}-\ref{thm:c} for odd degrees $k$, the cohomology group $H^k(\Mbar_{g,n})$, with its rational Hodge structure or Galois action after tensoring with $\QQ_\ell$, is a direct sum of Tate twists of $\s_{12} := W_{11}H^{11}(\cM_{1,11})$  and $\s_{16}:= W_{15} H^{15}(\cM_{1,15})$.  The ordinarity condition means we require $p$ is ordinary for the motivic structures $\s_{12}$ and $\s_{16}$ whenever their Tate twists appear. 

\begin{remark}
 For non-ordinary primes, the generalized Tate conjecture predicts the existence of a higher codimension algebraic subset that supports the given structure.  For instance, there should be a codimension 3 algebraic subset that supports $H^{11}((\Mbar_{1,11})_{\overline \FF_2}, \QQ_\ell)$. See Problem~\ref{prob:non-ordinary}. 
This ordinarity condition can be made explicit in all of the cases under consideration.  

For even $k$, the motivic structures are of Tate type and all primes are ordinary.  For odd $k \leq 13$ and odd $k \geq 2d-13$, we only have Tate twists of $\s_{12}$, and ordinarity means $p \nmid \tau(p)$.  Here, $\tau$ denotes the Ramanujan $\tau$-function, i.e., $\tau(p)$ is the $p$th Fourier coefficient of the Ramanujan cusp form $\Delta$. It is not known whether there are infinitely many primes $p$ such that $\tau(p) \equiv 0 \mod p$.
The set of such primes less than $10^{10}$ is $$\{2,\, 3,\, 5,\, 7,\, 2411,\,  7758337633 \}.$$  

In degree $2d-15$, we also see Tate twists of $\s_{16}$ (but only when $g =1$ and $n \geq 15$). In these cases, ordinarity means that $p$ also does not divide $a_p(\Delta_{16})$, the $p$th Fourier coefficient of the weight $16$ cusp form of level 1 for $\SL_2(\ZZ)$. The set of non-ordinary primes for $\s_{16}$ that are less than $10^6$ is $$\{2, 3, 5, 7, 11, 13, 59, 15271, 187441\}.$$ It is not known whether there are infinitely many non-ordinary primes for $\s_{16}$.
\end{remark}

We include semisimplicity as part of the generalized Tate conjecture, i.e., the action of $\Gal(L^s/L)$ on  $H^k((\Mbar_{g,n})_{L^s}, \QQ_\ell)$ is semisimple in each of the cases covered by Theorem~\ref{thm:c}.

\subsection{Summary table} The following table summarizes the degrees $k$ in which the generalized Hodge and Tate conjectures are known for $H^k(\Mbar_{g,n})$ for all (or almost all) values of $g$ and $n$, as well as the motivic structures (Hodge structures and Galois representations) that appear, and the relevant references.  The notation for the motivic structures is:
\[
\mathsf{L} := H^2(\PP^1), \quad \mathsf{S}_{12} := W_{11}H^{11}(\cM_{1,11}), \quad \mbox{ and } \quad \s_{16} := W_{15} H^{15}(\cM_{1,15}).
\]
Each is to be viewed as a rational vector space with an associated Hodge structure on the complexification and a continuous action of the absolute Galois group $\Gal(\overline \QQ /\QQ)$ on the tensor product with $\QQ_\ell$, for each prime $\ell$. We fix $d := 3g-3+n$.

For the generalized Tate conjecture, we assume that $L$ is a finitely generated field and if $L$ has characteristic $p >0$, then $p \neq \ell$. For $k = 11$, $13$, and $2d-j$ for odd $j \leq 13$, the further restriction is that $p \nmid \tau(p)$. For $2d-15$, we also require that $p$ is ordinary for $\s_{16}$, i.e. $p$ does not divide $a_p(\Delta_{16})$, the $p$th Fourier coefficient of the weight 16 cusp form of level 1 for $\SL_2(\ZZ)$.

\begin{table}[ht]
\centering
\small
\renewcommand{\arraystretch}{1.15}
\begin{tabular}{@{}llll@{}}
\toprule
Degree $k$ & Motivic structures \ \ & References & Restrictions \\ 
\midrule

$1,\ 3,\ 5$
& 0
& \cite{ArbarelloCornalba98}
&\\

$2$
& $\mathsf{L}$
& \cite{ArbarelloCornalba98}
& \\

$4$
& $\mathsf{L}^{2}$
& \cite{CLP24}
& \\

$6$
& $\mathsf{L}^{3}$
& \cite{CLP24}
& $g \notin \{7, 8, 9\}$ \\

$7$,\ $9$
& 0
& \cite{BFP24}
&\\

$11$
& $\mathsf{S}_{12}$
& \cite{CanningLarsonPayne23}
& $p \nmid \tau(p)$ \\

$13$
& $\mathsf{L}\mathsf{S}_{12}$
& \cite{CLPW24}
& $p \nmid \tau(p)$ \\

$2d-j$ for even $j \le 14$ \ \ 
& $\mathsf{L}^{(2d-j)/2}$
& \cite{CLP24}
& \\

$2d-j$ for odd $j < 11$ \ \ 
&   0
& \cite{BFP24}
& \\

$2d-j$ for $j = 11, 13$ \ \ 
&   $\mathsf{L}^{(2d-j-11)/2} \, \mathsf{S}_{12}$%
& \cite{CLP24}
& $p \nmid \tau(p)$\\

$2d-15$ \ \ 
&   $\mathsf{L}^{(2d-26)/2} \, \mathsf{S}_{12}$, \  $\mathsf{L}^{(2d-30)/2} \, \mathsf{S}_{16}$
& \cite{CLP24}
& $p\nmid \tau(p)$, $p \nmid a_p(\Delta_{16})$ \\

\bottomrule \\ 
\end{tabular}

\caption{Known cases of the generalized Hodge and Tate conjectures for $H^k(\overline{\mathcal{M}}_{g,n})$ and $H^k((\Mbar_{g,n})_{L^s}, \QQ_\ell)$, respectively.}%
\label{tab:known-ghc-gtc}
\end{table}

Thus, the verified picture up to degree $15$ is that low even degree cohomology is Hodge–Tate, odd cohomology vanishes below degree 11, $H^{11}$ comes from the weight 12 cusp-form motive $\s_{12}$, and $H^{13}$ comes from boundary-supported Tate twists of that motive. Moreover, homology classes in even degrees less than or equal to 14 come from tautological cycles, as do cohomology classes in degrees less than or equal to 6, except possibly for $g \in \{7,8,9\}$.

\subsection{Structure of the article}
The article is organized as follows. Section~\ref{sec:HodgeTate} recalls the Hodge and Tate conjectures and their generalized forms, and explains why moduli spaces of stable curves are a particularly interesting testing ground for these questions. Section~\ref{sec:tautological} discusses tautological classes and the Arbarello–Cornalba inductive method, leading to the proof of Theorem~\ref{thm:a}. Section~\ref{sec:arithmetic} describes the arithmetic predictions that motivate much of the story, especially those arising from the work of Chenevier and Lannes. Section~\ref{sec:further-results} then proves Theorem~\ref{thm:b} and discusses related conjectures on Chow groups and cycle class maps. Section~\ref{sec:outlook} concludes with a selection of open problems and future directions.

\bigskip

\noindent \textbf{Acknowledgments.} %
I thank J.~Bergstr\"om, S.~Canning, C.~Faber,  H.~Larson, and J. Milne for helpful comments on a draft of this article, and Q.~Yin for illuminating discussions.

\section{Hodge and Tate conjectures} \label{sec:HodgeTate}

We now recall the statements of the conjectures in question and discuss how and why the moduli spaces of stable curves are an interesting test case.

\subsection{The Hodge conjecture}

Let $X$ be a smooth projective variety, or a smooth and proper Deligne--Mumford stack with projective coarse moduli space, over $\CC$.  The rational singular cohomology group $H^k(X) := H^k(X(\CC),\QQ)$ carries a canonical pure Hodge 
structure, i.e., a decomposition
\[
H^k(X) \otimes \CC \simeq \bigoplus_{p + q = k} H^{p,q}(X)
\]
such that $H^{p,q}(X) = \overline{H^{q,p}(X)}$. This decomposition is functorial for maps between cohomology groups induced by algebraic morphisms.  

Let $Z^k(X)$ denote the free abelian group on closed irreducible algebraic subsets of codimension $k$ in $X$. Elements of $Z^k(X)$ are codimension $k$ algebraic cycles. 
\begin{HC}
The rational cycle class map 
\[
\cyc^k \colon Z^k(X) \otimes \QQ \to H^{2k}(X) \cap H^{k,k}(X)
\]
is surjective for all $k$.
\end{HC}
\noindent The Hodge conjecture is known in only a few important cases. First, the conjecture is true for codimension $k = 1$; this is the Lefschetz $(1,1)$-theorem. Using the hard Lefschetz theorem, it follows that the Hodge conjecture holds when $\dim(X) \leq 3$.  Also, by intersection theory, the image of $\bigoplus_k \cyc^k$ is a subring of $H^*(X)$.  In particular, the Hodge conjecture holds for $X$ whenever the subring $\bigoplus_k \big(H^{2k}(X) \cap H^{k,k}(X)\big)$ is generated by $H^{2}(X) \cap H^{1,1}(X)$. In particular, this is true for very general abelian varieties in each dimension \cite{Mattuck58}. The Hodge conjecture also holds when $X$ is cellular (i.e. paved by locally closed subvarieties isomorphic to affine spaces), a class that includes Grassmannians, flag varieties, and smooth projective toric varieties \cite[Example~19.1.11]{IT}, and more generally for smooth and proper linear varieties, in the sense of Totaro \cite[Theorem~3]{Totaro14}, and for smooth and proper Deligne--Mumford stacks with the Chow--K\"unneth generation property \cite[Lemma~3.11]{CanningLarson22}.  In addition, a recent preprint of Markman proves the Hodge conjecture for abelian varieties of dimension at most 5 \cite{Markman25}.

\bigskip

\subsection{The Tate conjecture} 
Let $Y$ be a smooth projective variety, or a smooth and proper Deligne--Mumford stack with projective coarse moduli space, over a finitely generated field $L$.  For instance, $L$ might be a number field, a finite field, or the function field of a variety over a finite field.  The Galois group of $L$ is the automorphism group of its separable closure $$\Gal := \Gal(L^{\mathrm{s}} / L ).$$ Let $\ell$ be a prime invertible in $L$, i.e., $\ell \neq \mathrm{char}(L)$.

\begin{TC}
The $\ell$-adic rational cycle class map
\[
\cyc^k_\ell \colon Z^k(Y) \otimes \QQ_\ell \to H^{2k}(Y_{L^s}, \QQ_\ell(k))^{\Gal}
\]
is surjective for all $k$.
\end{TC}
\noindent Like the Hodge conjecture, the Tate conjecture is wide open in general, and known in only a few important cases. In particular, it is known for $k =1$ when $Y$ is an abelian variety \cite{Tate66, Faltings83, Zarhin14}, and when $Y$ is a $K3$ surface  \cite{MadapusiPera15, KimMadapusiPera16, MadapusiPera20}.

The Hodge and Tate conjectures are two keystones in an intricate structure of interrelated conjectures about algebraic cycles and motivic structures on cohomology, all tied to the expected properties of categories of motives, as envisioned by Grothendieck. See \cite{Andre04} for a comprehensive introduction to this topic, as well as the more recent survey \cite{Totaro17} highlighting the role of the Tate conjecture. 

One conjecture closely linked to the Tate conjecture is the expectation that the Galois representations arising in the cohomology of smooth projective varieties should decompose as direct sums of irreducible representations.

\begin{SSC}
The Galois action on $H^*(Y_{L^s}, \QQ_\ell)$ is semisimple.
\end{SSC}

\noindent Other closely related conjectures include the Grothendieck standard conjectures. There are many known implications among various cases of these conjectures. For instance, when $L$ has characteristic zero, the Tate conjecture implies the semisimplicity conjecture \cite{Moonen19}. Also, the Hodge conjecture for CM abelian varieties implies the Tate conjecture and the Grothendieck standard conjectures for abelian varieties over finite fields \cite{Milne99, Milne02}. Furthermore, the Grothendieck standard conjectures follow from the Hodge conjecture in characteristic zero \cite{Kleiman94}. 

\medskip

For the applications in this article, the most relevant strengthenings are the generalized Hodge and Tate conjectures. Rather than focusing only on when even degree classes arise from algebraic cycles, they ask when a given substructure in cohomology must be supported on a closed subset of specified codimension. We begin with the Hodge-theoretic formulation.

\begin{GHC}
    Let $X$ be a smooth projective algebraic variety, or a smooth and proper Deligne--Mumford stack with projective coarse moduli space, over $\CC$. Let $V \subset H^j(X)$ be a rational sub-Hodge structure, with $V \otimes \CC = \bigoplus_{p+q = j} V^{p,q}$. If $V^{p,q} = 0$ for $p <  k$ then there is a closed algebraic subset $Z \subset X$ of codimension $k$ such that
    \[
V \subset \ker (H^j(X) \to H^j(X \smallsetminus Z)).
    \]
\end{GHC}
\noindent In other words, a rational sub-Hodge structure of Hodge coniveau $k$ should be supported on an algebraic subset of codimension $k$. The Hodge conjecture is recovered by taking $j = 2k$ and $V$ to be the sub-Hodge structure $H^{2k}(X) \cap H^{k,k}$.

There is a parallel formulation on the arithmetic side, where the Hodge coniveau is replaced by an analogous condition on the divisibility of Frobenius eigenvalues. The predicted geometric conclusion remains the same. %

\begin{GTC}
Let $X$ be a smooth projective variety, or a smooth and proper Deligne--Mumford stack with projective coarse moduli space over a finitely generated field $L$.  Let $V \subset H^j(X_{L^s}, \QQ_\ell)$ be a Galois subrepresentation, and suppose that $V(k)$ is effective, meaning that the eigenvalues of Frobenius elements acting on $V(k)$, at good places, are algebraic integers. Then there is a closed algebraic subset $Z \subset X$ of codimension $k$ such that 
\[
V \subset \ker \big(H^j(X_{L^s}, \QQ_\ell) \to H^j((X \smallsetminus Z)_{L^s}, \QQ_\ell)\big).
\]
\end{GTC}

\noindent In other words, a sub-Galois representation of Tate coniveau $k$ should be supported on an algebraic subset of codimension $k$. The Tate conjecture is recovered by taking $j = 2k$ and $V \subset H^{2k}(X_{L^s}, \QQ_\ell)$ to be the untwisted form of the invariant subspace $H^{2k}(X_{L_s}, \QQ_\ell(k))^{\Gal}$.

These generalized formulations, together with the semisimplicity conjecture for the $\ell$-adic Galois representations, are the natural framework for the results discussed below. They turn questions about motivic structures into concrete geometric statements. This perspective is well-suited to $\Mbar_{g,n}$ where, in many cases, the boundary strata, together with cycles representing the $\kappa$- and $\psi$-classes, and intersection products thereof, supply natural candidates for the expected support loci.

\subsection{Moduli spaces of stable curves as a test case}

The moduli spaces of stable algebraic curves $\Mbar_{g,n}$, for $2g - 2 + n > 0$, occupy a central position in modern mathematics, connecting algebraic geometry to other branches of mathematics ranging from combinatorics and representation theory to number theory and mathematical physics. Mumford famously detailed his obsession with these spaces, which he described as a ``passion flower" in the ``secret garden" of algebraic geometry \cite{Mumford97}. 

The structure of these spaces, and the natural morphisms between them, are extraordinarily deep and intricate. The combinatorics of the boundary $$D_{g,n} := \Mbar_{g,n} \smallsetminus \cM_{g,n}$$ is encoded in the moduli space of tropical curves \cite{acp}; its structure is sufficiently intricate to encode Kontsevich's commutative graph complex and, in particular, the Grothendieck--Teichm\"uller Lie algebra \cite{CGP21}.  Meanwhile, the pure weight cohomology of the interior $\cM_{g,n}$ is a rich source of motives and Galois representations over $\ZZ$ \cite{FvdG04, FvdG04b, BvdG08, BFvdG14, BergstromFaber23}. 

It is of fundamental importance that both $\Mbar_{g,n}$ and its normalized boundary $\widetilde D_{g,n}$ are smooth and proper over $\Spec \ZZ$ \cite{DeligneMumford69, Knudsen83}.
Recent work in arithmetic makes deep predictions regarding the classification of motives over $\ZZ$;  see \cite[Theorem~F]{ChenevierLannes19} for a list of the expected motives in weights less than or equal to $22$ that should appear in the cohomology of spaces that are smooth and proper over $\Spec \ZZ$. Every motive on this list appears in $H^*(\Mbar_{g,n})$ for suitable $g$ and $n$. 

Recent geometric work \cite{CGP21, CGP22, PayneWillwacher24, PayneWillwacher24b, CLPW25, CLPW25b} casts light on the patterns with which each motive appears in the cohomology of both $\cM_{g,n}$ and $\Mbar_{g,n}$, as $g$ and $n$ vary.  Up to weight 15, the appearance of each small motive in $\cM_{g,n}$ is governed by an explicit graph complex, and techniques for computing with graph complexes shed new light on the cohomology of these moduli spaces. For instance, we now know that the cohomology of $\cM_{g,n}$ is of Tate type if and only if $g = 0$ or $3g + 2n < 25$ \cite{CLPW24}.

Since we have a precise list of the expected motives in each weight up to $22$, and since we have geometric methods for computing where each of these motives appears, the cohomology of $\Mbar_{g,n}$ is a rich testing ground for general conjectures about motives, Hodge structures, and Galois representations.

\medskip

Throughout, whenever we refer to $\Mbar_{g,n}$, we assume that $2g -2 + n > 0$.

\subsection{Odd examples} \label{sec:odd} For odd $k < 11$, we have $H^k(\Mbar_{g,n}) = 0$ for all $g$ and $n$ \cite{BFP24}. The proof of this topological fact relies heavily on algebraic geometry (point counting over finite fields and the Behrend-Grothendieck-Lefschetz trace formula for algebraic stacks). Given this vanishing, the generalized Hodge and Tate conjectures hold vacuously in these cases.

For $k =11$, we have the famous example $H^{11}(\Mbar_{1,11}) \cong \QQ^2$, with Hodge type
\[
H^{11}(\Mbar_{1,11}, \CC) \cong H^{11,0}\oplus H^{0,11}.
\]
In particular, $H^{11}(\Mbar_{1,11}, \CC)$ is spanned by a holomorphic form and its complex conjugate. This holomorphic form has origins in arithmetic, and comes, in particular, from the Ramanujan cusp form of weight $12$ for $\SL_2(\ZZ)$, as we now recall. 

By mixed Hodge theory, the subspace $W_kH^k(\cM_{g,n})$ is the image of $H^k(\Mbar_{g,n})$ under restriction to the open subspace parameterizing smooth curves. It carries a pure Hodge structure of weight $k$, given by $W_k H^k(\cM_{1,k}) \otimes \CC = \bigoplus_{p + q = k} H^{p,q}$. For $g = 1$, the Eichler--Shimura correspondence identifies $W_k H^{k}(\cM_{1,k}) \cap H^{k,0}$ with the space of cuspidal modular forms of weight $k + 1$ and level $1$ for $\SL_2(\ZZ)$. This can be made completely explicit, as in \cite[Section~2.3]{FaberPandharipande13}. Let $\HH$ denote the upper half-plane, the universal cover of $\cM_{1,1}$. Then $\HH \times \CC^{k-1}$ is the universal cover of the $(k-1)$-fold fiber product of the universal curve, which contains $\cM_{1,k}$ as a dense open subspace. The transformation rule for a modular form $f$ of weight $k+1$ is exactly such that the restriction of the holomorphic form $f dz \wedge dz_1 \wedge \cdots \wedge dz_{k-1}$ on $\HH \times \CC^{k-1}$ to the preimage of this open set descends to a holomorphic form on $\cM_{1,k}$. The cuspidal condition ensures that this form extends to $\Mbar_{1,k}$.  In particular, the image of the Ramanujan cusp form $\Delta$ under this correspondence spans $H^{11,0}(\Mbar_{1,11}) \cong \CC$. Since $H^{11}(\Mbar_{1,11})$ is irreducible as a rational Hodge structure, and $H^{11,0} \neq 0$, there is no nontrivial substructure with positive Hodge coniveau, and hence the generalized Hodge conjecture is true in this case. Similarly, under the stated hypothesis on the characteristic, $H^{11}((\Mbar_{1,11})_{L^s}, \QQ_\ell)$ has Tate coniveau 0, and no nonzero sub-Galois representation has positive Tate coniveau.  Hence, the generalized Tate conjecture is true in this case. 

Recent results show that $H^{11}(\Mbar_{g,n})$ vanishes unless $g = 1$ and $n \geq 11$, in which case it is a direct sum of pullbacks of $H^{11}(\Mbar_{1,11})$ \cite{CanningLarsonPayne23}. In particular, any nonzero rational sub-Hodge structure has non-vanishing $H^{11,0}$, and the Tate twist $V(1)$ is not effective for any nonzero sub-Galois representation $V \subset H^{11}((\Mbar_{g,n})_{L^s}, \QQ_\ell)$.  The generalized Hodge and Tate conjectures therefore hold for $k = 11$.

The first case where there is a nontrivial substructure of positive Hodge coniveau is for $H^{13}(\Mbar_{1,12})$. Explicit computation shows that this cohomology group is isomorphic to $\QQ^{22}$, with Hodge types $$H^{13}(\Mbar_{1,12}, \CC) \cong H^{12,1} \oplus H^{1,12}.$$ Moreover, the symmetric group $S_{12}$ acts on $H^{12,1}$ as the irreducible representation associated to the hook shape partition $(2,1^{10})$ (see, e.g., \cite{BergstromData}).  The generalized Hodge conjecture predicts that, since $H^{13,0} = 0$, the group $H^{13}(\Mbar_{1,12})$ should be supported on a divisor. 

For each subset $A \subset \{1, \ldots, 12\}$ of size $10$, there is a gluing map
\[
\iota_A \colon \Mbar_{1, A \cup \{p \}} \times \Mbar_{0, A^c \cup \{q\}} \to \Mbar_{1, 12},
\]
obtained by gluing $p$ to $q$.  The domain is isomorphic to $\Mbar_{1,11}$, and the Gysin push-forward of $H^{11}(\Mbar_{1,11})$ is nonzero; this can be seen by pulling back under other tautological morphisms and taking K\"unneth components. Using the Hodge structure and irreducibility of $H^{12,1}(\Mbar_{1,12})$ as an $S_{12}$-representation, we conclude that the sum of the Gysin maps $\bigoplus_A \iota_{A*}$ surjects onto $H^{13}(\Mbar_{1,12})$. Hence, $H^{13}(\Mbar_{1,12})$ is supported on the union of the images of these gluing maps. In particular, it is supported on a divisor, as predicted by the generalized Hodge conjecture.

\begin{remark}
    The fact that $H^{13}(\Mbar_{1,12})$ is supported on a divisor was proved decades ago \cite[Proposition~6]{GraberPandharipande03}.  The generalization to arbitrary $g$ and $n$ is more recent \cite[Section~4]{CLPW24}. 
\end{remark}

\subsection{Motivic structures}

The topics discussed here are informed by the philosophy of motives, which predicts a geometrically defined abelian category with properties that explain and unify observed phenomena in Hodge theory, algebraic cycles, and point counts over finite fields. However, lacking proof that any geometrically defined category has all of the expected properties, we work naively, and define a motivic structure of weight $k$ to be a rational vector space $V$ together with a pure Hodge structure of weight $k$ on $V \otimes \CC$ and a continuous $\ell$-adic Galois representation of weight $k$ on $V \otimes \QQ_\ell$, with suitable compatibilities and comparison isomorphisms \cite{Deligne89}. (In other contexts, one may also wish to include other realizations, such as de Rham and crystalline cohomology, but for simplicity we leave these aside.)

\section{Tautologically generated cohomology groups} \label{sec:tautological}

The tautological subring $RH^*(\Mbar_{g,n}) \subset H^*(\Mbar_{g,n})$ is the $\QQ$-subalgebra generated by classes that arise most naturally from the projection map from the universal curve and the stratification of the boundary. More precisely, it is the $\QQ$-subalgebra generated by the $\kappa$-, $\psi$-, and boundary classes. Equivalently, $\{ RH^*(\Mbar_{g,n}) \}_{g,n}$ is the smallest system of subrings that contains $\QQ$ and is closed under push-forward for all tautological morphisms.  One says that $H^k(\Mbar_{g,n})$ is tautological when $RH^k(\Mbar_{g,n}) = H^k(\Mbar_{g,n})$. In particular, if $k$ is odd then $H^k(\Mbar_{g,n})$ is tautological if and only if it is zero.

The systematic study of these tautological rings was initiated by Mumford, in pursuit of an enumerative geometry of curves analogous to Schubert calculus on Grassmannians and flag varieties \cite{Mumford83}. %
It is now the most intensively studied part of the intersection theory of moduli spaces of curves, particularly in the context of enumerative geometry and Gromov-Witten theory \cite{FaberPandharipande00, FaberPandharipande03,FaberPandharipande05,  FaberPandharipande13, Pixton13}.

A fair bit is known about the set of pairs $(g,n)$ such that the full cohomology ring $H^*(\Mbar_{g,n})$ is tautological. This is the case for $g = 0$; for $g = 1$ and $n \leq 10$; for $g = 2$ and $n \leq 9$, and for $3 \leq g \leq 7$ and $2g + n \leq 14$ \cite{Keel92, Petersen14, CanningLarson22}.  Odd cohomology first arises in $H^{11}(\Mbar_{1,11})$; however, the even degree cohomology of $\Mbar_{1,n}$ is tautological for all $n$ \cite{Petersen14}. 

Non-tautological classes also appear in even degree, and even in $H^{2k}(\Mbar_{g,n}) \cap H^{k,k}$. This was first observed on $\Mbar_{2,20}$ \cite{GraberPandharipande03}. The first example without marked points comes from the closure of the bielliptic locus in $\Mbar_{12}$ \cite{vanZelm18}. Many more examples are given in  \cite{ACCHLMT25}.

A related problem is to determine the degrees $k$ such that $H^k(\Mbar_{g,n})$ is tautological for all $g$ and $n$.  Arbarello and Cornalba proved that $H^2(\Mbar_{g,n})$ is tautological for all $g$ and $n$ \cite{ArbarelloCornalba98}. More recently, it was shown that $H^4(\Mbar_{g,n})$ is tautological for all $g$ and $n$, and that $H^6(\Mbar_{g,n})$ is tautological for all $g \notin \{7,8,9\}$ and all $n$ \cite{CLP24}. %

\begin{proposition} \label{prop:tautological}
If $H^{2k}(\Mbar_{g,n})$ is tautological then the Hodge conjecture holds for algebraic cycles of codimension $k$ on $\Mbar_{g,n}$, as does the Tate conjecture for algebraic cycles of codimension $k$ over any finitely generated field $L$. %
\end{proposition}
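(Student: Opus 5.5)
The plan is to show that every tautological class of degree $2k$ is the cycle class of an algebraic cycle with rational coefficients, defined compatibly over any field. Surjectivity onto the Hodge classes, and onto the Galois invariants in the $\ell$-adic setting, then follows at once, since all of $H^{2k}$ lies in the image.

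First, I will recall that the tautological classes are the $\QQ$-span of pushforwards under gluing maps $\xi_\Gamma\colon \Mbar_\Gamma \to \Mbar_{g,n}$ of monomials in $\kappa$- and $\psi$-classes on the factors $\Mbar_{g_v,n_v}$. Equivalently, they form the smallest system of subrings closed under pushforward along tautological morphisms. Each $\psi_i$ is the first Chern class of a line bundle, namely the cotangent line $\mathbb{L}_i$. Each $\kappa_a$ is the pushforward of $\psi_{n+1}^{a+1}$ along the forgetful map, which is proper. Boundary classes are pushforwards of fundamental classes along the finite gluing maps. Hence every generator is the cycle class of an element of the rational Chow group $A^*(\Mbar_{g,n})_\QQ$. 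Concretely, I will use that Chern classes of line bundles lift to Chow, that proper pushforward and intersection products on a smooth DM stack are compatible with the cycle class map, and that rational Chow groups of the coarse space agree with those of the stack. It follows that $RH^*(\Mbar_{g,n})$ lies in the image of the cycle class map. If $H^{2k}=RH^{2k}$, the map $\cyc^k$ is surjective onto $H^{2k}$, and a fortiori onto $H^{2k}\cap H^{k,k}$. So the Hodge conjecture holds in codimension $k$.

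For the Tate conjecture, I will use that $\Mbar_{g,n}$, its universal curve, the line bundles $\mathbb{L}_i$, the boundary strata and the gluing and forgetful maps are all defined over $\Spec\ZZ$ (Deligne--Mumford, Knudsen). The same tautological cycles therefore exist over $L$, and their $\ell$-adic cycle classes span a subspace $R_\ell$ of $H^{2k}((\Mbar_{g,n})_{L^s},\QQ_\ell(k))$ on which $\Gal$ acts trivially. The key step is to show that $R_\ell$ is all of $H^{2k}$. The tautological ring is defined by the same universal formulas in every cohomology theory. By smooth proper base change, applied first over $\ZZ[1/\ell]$ to pass to characteristic $0$ and then via the Artin comparison isomorphism, the $\ell$-adic and Betti cohomology groups and their tautological subspaces match under compatible cycle class maps. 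Hence the hypothesis $H^{2k}=RH^{2k}$ in Betti cohomology transfers to $\ell$-adic cohomology over $L^s$, in every characteristic $p\ne\ell$. Then $\cyc^k_\ell$ is surjective onto $H^{2k}(\,\cdot\,,\QQ_\ell(k))$, and in particular onto its Galois invariants. In fact, Galois acts trivially on the whole group.

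The main obstacle I expect is this transfer step in positive characteristic. One must check that the specialization isomorphisms for the smooth proper stack over $\ZZ[1/\ell]$ are compatible with the cycle classes of the tautological cycles, that is, that cycle classes commute with specialization. I would handle this by working with relative cycles over the base: the tautological cycles are flat over $\Spec\ZZ$ because they are built from strata and line bundles, and the relative cycle class in $H^{2k}$ of the total space restricts to the fibrewise classes. A secondary technical point is working on a DM stack rather than a variety. Here I would use rational coefficients, Vistoli's intersection theory on stacks, and the identification of rational cohomology of $\Mbar_{g,n}$ with that of its coarse space.
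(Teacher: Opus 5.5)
Your proposal is correct and follows essentially the same route as the paper: the Hodge case is immediate because tautological classes are algebraic cycle classes, characteristic zero is handled by the comparison isomorphism, and positive characteristic by lisseness of $R^{2k}f_*\QQ_\ell(k)$ over $\Spec \ZZ[1/\ell]$. Your relative cycle classes on the total space play the role of the paper's global sections of that sheaf. You give more detail than the paper on why the generators are cycle classes and on compatibility of cycle classes with specialization, but the argument is the same.
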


\begin{proof}
Suppose the rational singular cohomology group $H^{2k}(\Mbar_{g,n})$ is generated by tautological algebraic cycle classes. For the Hodge conjecture, the proposition is clear.  We now show that the Tate conjecture holds for algebraic cycles of codimension $k$ on $\Mbar_{g,n}$ over any finitely generated field $L$, using the standard comparison theorems for schemes and Deligne--Mumford stacks \cite{Milne80, Behrend04}. 

If $L$ has characteristic zero, then the choice of an embedding $L^s \hookrightarrow \CC$ determines a comparison isomorphism 
$$H^{2k}((\Mbar_{g,n})_{L^s}, \QQ_\ell(k)) \xrightarrow{\sim} H^{2k}(\Mbar_{g,n}(\CC), \QQ(k)) \otimes \QQ_\ell,$$ which is compatible with the cycle class maps. The tautological classes are defined over $\QQ$ and hence invariant under $\Gal(L^s/L)$. Thus, if tautological cycle classes generate $H^{2k}(\Mbar_{g,n}(\CC), \QQ)$, then they also generate $H^{2k}((\Mbar_{g,n})_{L^s}, \QQ_\ell(k))$, and the Tate conjecture holds in these cases.

It remains to consider the case where $L$ has characteristic $p > 0$.  Since the structure map $f \colon \Mbar_{g,n} \to \Spec \ZZ$ is smooth and proper, the sheaf $R^{2k} f_* \QQ_\ell(k)$ is lisse over $\Spec \ZZ[\frac{1}{\ell}]$. Since the tautological cycle classes are defined over $\ZZ$, they give global sections of this sheaf. If these sections span the characteristic zero fiber, then they span every geometric fiber over $\Spec \ZZ[\frac{1}{\ell}]$. In particular, their span contains the Galois-invariant subspace.
\end{proof}

We briefly recall the inductive method of Arbarello and Cornalba \cite{ArbarelloCornalba98}, which underlies much of the recent progress in this area.  Recall that
\[
D_{g,n}:=\overline{\mathcal M}_{g,n}\setminus \mathcal M_{g,n},
\]
denotes the boundary divisor in the moduli space of stable curves, and $\widetilde D_{g,n}$ is its normalization, which is a disjoint union of products of smaller moduli spaces (or quotients of such products by finite groups). The localization sequence for the open inclusion $\cM_{g,n} \subset \Mbar_{g,n}$, together with the yoga of weights, gives a right exact sequence
\begin{equation} \label{eq:exc1}
H^{k-2}(\widetilde D_{g,n}) (-1) \to H^k(\Mbar_{g,n}) \to W_k H^k(\cM_{g,n}) \to 0.
\end{equation}
In this way, every class in $H^k(\Mbar_{g,n})$ is built out of cohomology classes on smaller moduli spaces of curves and classes in $W_k H^k(\cM_{g,n})$, the pure part of the cohomology of the interior.  

There are many techniques for understanding the cohomology groups $H^k(\cM_{g,n})$ and their pure weight subspace $W_k H^k$.  Some of these techniques are topological, using nontrivial inputs from either geometric group theory or stable homotopy theory, based on the interpretation of $\cM_{g,n}$ as a rational classifying space for the mapping class group, or oriented diffeomorphism group, of a marked surface. Harer's computation of the virtual cohomological dimension of mapping class groups of surfaces gives strong vanishing results for $H^k(\cM_{g,n})$ when $k$ is large relative to $g$ and $n$  \cite{Harer86}. The relevant bound is roughly $4g-4+n$, for $g \geq 2$. Equivalently, by Poincar\'e duality, the compactly supported cohomology of $\cM_{g,n}$ vanishes in degrees less than roughly $2g + n - 1$. We then have the alternative formulation of excision of the boundary,
\begin{equation} \label{eq:exc2}
\cdots \to H^k_c(\cM_{g,n}) \to H^k(\Mbar_{g,n}) \to H^k(D_{g,n}) \to \cdots
\end{equation}
In particular, when $k$ is small, the first term in \eqref{eq:exc2} vanishes and $H^k(\Mbar_{g,n})$ injects into $H^k(D_{g,n})$. Moreover, an easy mixed Hodge theory argument shows that the composition with pullback to the normalization $\tilde D_{g,n}$ is also injective. Thus, when $k$ is small there is an injection $H^k(\Mbar_{g,n}) \hookrightarrow H^k(\tilde D_{g,n})$ and hence every class in $H^k(\Mbar_{g,n})$ comes from cohomology classes of the same degree on smaller moduli spaces. 

Another key input from topology is stabilization: when $k$ is smaller than $\frac{2g-2}{3}$, the cohomology of $\cM_{g,n}$ is stable and freely generated by tautological $\kappa$- and $\psi$-classes. For $\cM_g$, this is the famous theorem of Madsen and Weiss \cite{MadsenWeiss07} conjectured by Mumford \cite{Mumford83}. See \cite{Looijenga96} for the extension to $\cM_{g,n}$.

Elaborations on these ideas, combining nontrivial inputs from topology with techniques from mixed Hodge theory, are at the heart of the proofs in \cite{ArbarelloCornalba98}, and continue to form the basis of many inductive arguments about the cohomology of $\Mbar_{g,n}$. 

The recent work \cite{CLP24} follows the same philosophy but packages it in a form better suited to controlling high degree cohomology groups. Rather than working with either the full cohomology ring, or only with the tautological subring, one allows intermediate systems of subrings, formalized as semi-tautological extensions, that are still closed under the tautological operations that move classes from smaller moduli spaces to larger moduli spaces, by push-forward for gluing and pullback for forgetting markings. The localization sequence \eqref{eq:exc1} reduces many questions to controlling the quotient of $W_kH^k(\mathcal M_{g,n})$ by classes coming from the boundary and from fewer markings. A Leray--K\"unneth analysis of the universal curve shows that this quotient vanishes once $n$ is large relative to $k$, while the virtual cohomological dimension rules out the opposite extreme, so for fixed $k$ only finitely many small pairs $(g,n)$ can contribute. 

To get strong new results from this formalism, one also needs new ways of establishing the base cases needed to run the inductive arguments in this framework. In \cite{CLP24}, this is done using explicit low-genus calculations and algebraicity results for pure weight cohomology, via the Chow--K\"unneth generation property. The Chow--K\"unneth generation property is established geometrically, on a case-by-case basis. For instance, for certain moduli of genus $7$ curves, the key geometric input comes from Mukai's description of canonical curves as linear sections of an orthogonal Grassmannian. %
With this framework in mind, the proof of Theorem~A is essentially an accounting of the base cases that are currently accessible.

\begin{proof}[{\bf Sketch of proof of Theorem~\ref{thm:a}}]
Arbarello and Cornalba computed $H^2(\Mbar_{g,n})$ for all $g$ and $n$, showing that it is generated by the tautological classes $\kappa_1, \psi_1, \ldots, \psi_n$ and boundary divisors \cite{ArbarelloCornalba98}. %
By Proposition~\ref{prop:tautological}, this proves the theorem for cycles of codimension 1.

For cycles of codimension 2, we follow a similar argument, namely a double induction on $g$ and $n$ using the weight spectral sequence for the open inclusion $\cM_{g,n} \subset \Mbar_{g,n}$. The argument also relies on knowledge of the stable cohomology of the open moduli spaces $\cM_{g,n}$ in low degrees, and vanishing theorems for their high degree cohomology \cite{Harer85, Harer86, Looijenga96, MadsenWeiss07}. The details are given in \cite{CLP24} and show that $H^4(\Mbar_{g,n})$ is generated by tautological cycle classes for all $g$ and $n$. There, we mentioned that an easy variation on these arguments also shows that $H^6(\Mbar_{g,n})$ is tautological for $g \geq 10$, and for all $n$.  With a little more care in handling base cases, essentially the same argument works also for $g \leq 6$.

The tautological ring contains all divisor classes, including an ample class. Therefore, from what we have discussed above and the hard Lefschetz theorem, we know that $H^{2d-k}(\Mbar_{g,n})$ is generated by the classes of tautological algebraic cycles for even $k \leq 4$, as well as for $k = 6$ and $g \not \in \{7, 8, 9 \}$.  However, in \cite[Theorem~1.5]{CLP24}, we also show that $H^{2d-k}(\Mbar_{g,n})$ is generated by the classes of tautological algebraic cycles, for even $k \leq 14$.  This proves the theorem for algebraic cycles of dimension less than or equal to $7$, for all $g$ and $n$.
\end{proof}

\begin{remark}
As noted above, one can deduce tautological generation and related results in high degree $H^{2d-k}$ from the corresponding results for $H^k$, when $k<d$, using the hard Lefschetz theorem.  Thus, the high degree results are a priori easier than the corresponding low degree results. Nevertheless, the extent to which one obtains stronger results for cycles of low dimension rather than low codimension is striking. We note that this distinction lessens if one assumes the Grothendieck standard conjectures. In particular, the Lefschetz standard conjecture predicts that the inverse of the hard Lefschetz isomorphism is induced by an algebraic correspondence; assuming this conjecture (which is wide open in general), the Hodge conjecture for $H^{2d-k}$ implies the Hodge conjecture for $H^k$, when $k < d$.

We note that the methods of proof for the high degree and low degree results stated here are similar: one rigorously applies the method of Arbarello and Cornalba and establishes as many base cases as possible with current methods. The recent progress required to prove Theorem~\ref{thm:a} relies most heavily on new arguments to establish the base cases for relatively small values of $g$ and $n$ needed to run the induction. This includes point counting arguments \cite{BFP24}, using the Behrend--Grothendieck--Lefschetz trace formula for stacks \cite{Behrend93}, and applications of the Chow--K\"unneth generation property \cite{CanningLarson22, CLP24}, together with explicit classical constructions of small genus moduli spaces and their Brill--Noether--Petri strata.
\end{remark}

In \cite[Conjecture~1.8]{CLP24}, there is the following proposed extension of Theorem~\ref{thm:a}.

\begin{Taut20}
The cohomology groups $H^{k}(\Mbar_{g,n})$ are tautological for all even $k \leq 20$, and for all $g$ and $n$.
\end{Taut20}

\noindent By the hard Lefschetz theorem, it would follow that $H_k(\Mbar_{g,n})$ is also tautological for all even $k \leq 20$. The conjectured bound is tight (or \emph{taut}), since $H^{22}(\Mbar_{2,20})$ and $H^{22}(\Mbar_{12})$ are not tautological \cite{GraberPandharipande03, vanZelm18}. If we include odd degree cohomology groups, then the best possible bound drops to $10$.

\begin{Taut10}
All cohomology groups $H^k(\Mbar_{g,n})$ are tautological for $k \leq 10$.
\end{Taut10}

\noindent This conjectured bound is also best possible, since $H^{11}(\Mbar_{1,11})$ is not tautological. The taut 10 conjecture follows from the taut 20 conjecture, since $H^k(\Mbar_{g,n})$ vanishes for odd $k < 11$ \cite{BFP24}.

It is also natural to consider weaker forms of these conjectures, for low degree homology groups. Let $H_k(\Mbar_{g,n}) := H_k(\Mbar_{g,n}(\CC),\QQ)$ denote rational singular homology. The Poincar\'e pairing induces isomorphisms between rational singular homology and cohomology in complementary degrees 
\begin{equation} \label{eq:Poincare-pairing}
    H_k(\Mbar_{g,n}) \cong H^{2d-k}(\Mbar_{g,n}).
\end{equation}
Let $RH_k$ denote the subspace of $H_k(\Mbar_{g,n})$ generated by tautological cohomology classes, via \eqref{eq:Poincare-pairing}. We say that $H_k(\Mbar_{g,n})$ is tautological if it is equal to $RH_k(\Mbar_{g,n})$. %
By the hard Lefschetz theorem, iterated cup product with an ample class $\omega$ gives an isomorphism
\begin{equation*} %
 L_\omega^{d-k} \colon H^{k}(\Mbar_{g,n}) \xrightarrow{\sim} H^{2d-k}(\Mbar_{g,n}).
\end{equation*}
Since every divisor class is tautological, and in particular the ample class $\omega$, this isomorphism restricts to an injection
\begin{equation} \label{eq:HL-inclusion}
 RH^{k}(\Mbar_{g,n}) \hookrightarrow RH^{2d-k} (\Mbar_{g,n}) \cong RH_k(\Mbar_{g,n}) .
\end{equation}  
In particular, if $H^k(\Mbar_{g,n})$ is tautological then so is $H_k(\Mbar_{g,n})$. Note, however, that the inclusion \eqref{eq:HL-inclusion} can be strict, as is the case for $(g,n) = (2,20)$ \cite{Petersen16}.

The taut 20 conjecture seems to be significantly harder than the homological variant, i.e., the conjecture that $H_k(\Mbar_{g,n})$ is tautological for even $k \leq 20$.  Indeed, it is already known that $H_k(\Mbar_{g,n})$ is tautological for even $k \leq 14$ \cite[Theorem~1.5]{CLP24}. %

\section{Inspiration from arithmetic} \label{sec:arithmetic}

While the results discussed above are proved geometrically, the statements that have been investigated so far draw heavily on inspiration from arithmetic. The work of Chenevier and Lannes, classifying the automorphic representations of motivic weight less than or equal to 22 \cite[Theorem~F]{ChenevierLannes19} has been particularly influential. See also \cite{ChenevierTaibi20} for a simplified proof of this classification and a partial extension to weights 23 and 24.

The classification is proved using analytic $L$-function methods and, in particular, a variant of Odlyzko's minoration of discriminants \cite{Odlyzko75, Odlyzko90}, adapted to conductors of algebraic varieties, cf. \cite{Mestre86}.

\medskip

To state the classification in the geometric setting, we must specify a few $\ell$-adic Galois representations. For positive integers $k$, let
\[
\s_{k+1} := W_k H^k(\cM_{1,k}),
\]
which we view as a motivic structure, i.e., a rational vector space together with a pure Hodge structure on its complexification and a continuous action of $\Gal(\overline \QQ / \QQ)$ on its tensor product with $\QQ_\ell$, for each prime $\ell$.

By the Eichler--Shimura correspondence, $\s_{k+1}$ has dimension twice the dimension of the space of cusp forms of weight $k+1$ and level $1$ for $\SL_2(\ZZ)$, and the associated Hodge weights are $(k,0)$ and $(0,k)$. In particular, as $\ell$-adic representations,
\[
\s_{12}, \ \s_{16}, \ \s_{18}, \ \s_{20},  \mbox{ and } \ \s_{22}
\]
are irreducible and 2-dimensional, of motivic weights 11, 15, 17, 19, and 21, respectively.

For nonnegative integers $j$ and $k$, let $\s_{j,k}$ denote the Galois representation attached to the Siegel cusp forms of weight $\Sym^j \otimes \det^k$, which are realized in the appropriate weight-graded piece of the cohomology of a local system on $\cA_2$ \cite{Petersen15}. The motivic weight of $\s_{j,k}$ is $j + 2k -3$.
With this notation, 
\[
\s_{6,8}, \ \s_{4,10}, \ \s_{8,8}, \ \mbox{ and } \ \s_{12,6}
\]
are irreducible Galois representations of motivic weights 19, 21, 21, and 21, respectively. For further details on these Galois representations and their relations to holomorphic forms on moduli spaces of stable curves, see \cite[Section~3]{CLPW25b} and the references therein.

\begin{theorem} [\cite{ChenevierLannes19, ChenevierTaibi20}] \label{thm:CL}
Let $V$ be an effective irreducible $\ell$-adic Galois representation that belongs to a pure compatible system of $\ell$-adic Galois representations of conductor 1 and motivic weight $k \leq 22$. Consider the set of such representations $$R = \{1; \  \s_{12}, \ \s_{16}, \ \s_{18}, \ \s_{20}, \  \s_{22}; \ \s_{6,8}, \ \s_{4,10}, \ \s_{8,8}, \ \s_{12,6}; \ \Sym^2 \s_{12}\}.$$ 
Assume that for each $W$ in $R \cup \{V\}$, the completed $L$-functions attached to $V \otimes W$ and $V \otimes W^\vee$ have the expected analytic continuation, functional equation, epsilon factors, zeros, and poles.  Then $V$ is a Tate twist of a representation in $R$.
\end{theorem}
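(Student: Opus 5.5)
The plan is the analytic method of Chenevier--Lannes: the Riemann--Weil explicit formula applied to Rankin--Selberg $L$-functions, in the form of Odlyzko--Mestre minoration of conductors. Write $\pi$ for the cuspidal algebraic automorphic representation of $\mathrm{GL}_m$ over $\QQ$, or equivalently the compatible system, attached to $V$. It has conductor $1$, so it is unramified everywhere. Its Hodge numbers are determined by a set of weights $w_1 \ge \dots \ge w_m$ with $w_i + w_{m+1-i} = k$. Twisting by a power of the cyclotomic character, which is harmless since the conclusion is up to Tate twist, we normalize so that $k$ is the motivic weight and $V$ is effective.

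\textbf{Step 1.} For each $W \in R \cup \{V\}$, apply the explicit formula to the completed $L$-function $\Lambda(s, V \otimes W^\vee)$, using a test function $F$ with nonnegative Fourier transform, such as Odlyzko's function. The assumed analytic properties give a relation of the shape
\[
\sum_{\rho} \widehat F(\rho) \;+\; \sum_{p,\nu} (\text{prime terms}) \;=\; \text{pole term} \;-\; (\text{archimedean term depending only on the Hodge weights}).
\]
The pole term is nonzero only when $V \cong W$, which is where the Rankin--Selberg pole enters. Positivity of the zero sum and the trick of taking $W = V$ (so that the prime terms $|a_p|^2$-type coefficients are nonnegative) yield Mestre's inequality
\[
\sum_{i,j} \Omega_F(w_i - w_j) \le F(0)\cdot(\text{pole}) ,
\]
where $\Omega_F$ is the archimedean integral. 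This gives a finite list of weight multisets $\{w_i\}$, with $k \le 22$ and $m$ bounded, for which the inequality can hold. I would compute this list by a finite computer search over Hodge types, increasing the parameter of $F$ to prune.

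\textbf{Step 2.} For each surviving Hodge type, compare with the known members of $R$. For $W \in R$ with $W \not\cong V$ there is no pole, so the explicit formula for $V \otimes W^\vee$ and for $V \otimes W$ gives further linear inequalities. The sign of the prime sums is not controlled here, so I would combine them as in Chenevier--Lannes: consider the formal sum $V \oplus \bigoplus c_W W$ and apply positivity to its ``Rankin--Selberg square''. This excludes every Hodge type that is not realized by an element of $R$. Finally, for the Hodge types realized by members of $R$, I would show that $V$ must be isomorphic to that member. The idea is that if $V$ and $W$ have the same weights but $V \not\cong W$, then the square of $V \oplus W$ violates the inequality because the pole appears only twice. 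Strong multiplicity one then converts an isomorphism of automorphic representations into an isomorphism of compatible systems, hence of $\ell$-adic representations, using Chebotarev.

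\textbf{Main obstacle.} The hardest step is the exclusion in weights $21$ and $22$. There the archimedean terms are weakest, and several Hodge types lie near the boundary of the inequality, for example four-dimensional types beyond $\s_{8,8}$ and $\s_{12,6}$. The simple Odlyzko function is then insufficient. I would first try optimizing $F$ numerically within the family $F_\lambda$ and including the auxiliary $W$'s, especially $\Sym^2\s_{12}$ and the Siegel $\s_{j,k}$, in the positivity argument. Failing that, I would use the finer argument of Chenevier--Ta\"ibi: apply Arthur's multiplicity formula to count level-one forms on classical groups, and compare with known dimension formulas. That argument gives exact counts and removes the remaining borderline cases.
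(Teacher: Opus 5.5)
Your proposal follows the same route as the proof the survey cites from Chenevier--Lannes, which it describes as an Odlyzko--Mestre minoration of conductors. The ingredients match: the Riemann--Weil explicit formula for the Rankin--Selberg $L$-functions of $V$ against itself and against the members of $R$; positivity for formal sums $V\oplus\bigoplus c_W W$, with $c_W\ge 0$ so that zeros of the cross terms enter with nonnegative weight; a finite search over Hodge types; and the doubled-pole argument for uniqueness within a Hodge type.

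One correction concerns automorphy. The statement does not assume $V$ is automorphic, so there is no $\pi$ attached to $V$ as your first step asserts. Consequently:
\begin{enumerate}
\item The final identification $V\cong W$ must come directly from the assumed pole of $L(s,V\otimes W^\vee)$ at $s=1$, not from strong multiplicity one.
\item Your fallback via Arthur's multiplicity formula is unavailable here, since it only counts automorphic representations.
\end{enumerate}
The hypotheses of the statement involve only $V\otimes W$ and $V\otimes W^\vee$ for $W\in R\cup\{V\}$. This indicates that the cited argument needs only the explicit-formula positivity you describe in your main line.
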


\noindent Theorem~\ref{thm:CL} gives striking conditional statements about the motives appearing in algebraic varieties and Deligne--Mumford stacks that are smooth and proper over $\ZZ$, assuming standard and widely-believed predictions about the analytic properties of $L$-functions attached to motives. We now discuss these predictions, which are important inspiration for the results discussed in Section~\ref{sec:further-results}, which confirm many of the low weight predictions for moduli spaces of stable curves. The proofs of these results are geometric and unconditional.

Assuming the Tate conjecture, one expects that the $\ell$-adic Galois realization functor is fully faithful on pure motives over $\QQ$ modulo homological equivalence, tensored with $\QQ_\ell$.  Also, it is widely believed that the $L$-functions attached to motives have the expected analytic properties. Thus, when $X$ is smooth and proper over $\ZZ$, and $k \leq 22$, one expects that $H^k(X_{\overline \QQ}, \QQ_\ell)$ is a direct sum of $\ell$-adic realizations of irreducible pure motives, each of which is a Tate twist of an element of $R$. In particular, the cohomology groups $H^k(\Mbar_{g,n})$ should be pure Tate for even $k \leq 20$ and should vanish for odd $k < 11$. This is simply because $R$ contains no nontrivial representations of odd weight less than 11, and the only non-trivial representation of even weight is $\Sym^2 \s_{12}$, in weight 22.

\begin{remark}
One of the striking features of Theorem~\ref{thm:CL} is that the predicted list of Galois representations of weight at most 22 and conductor 1 is \emph{finite}. See also \cite[Theorems 3--5]{ChenevierTaibi20} for an extension of this finiteness result to weight 23 and, conditional on the Generalized Riemann Hypothesis, to weight 24. These statements should not be taken lightly. Chenevier, Lannes, and Ta\"ibi do not assume any \emph{a priori} bound on the rank and hence their results do not follow from the standard finiteness results for $\ell$-adic Galois representations \cite{Faltings83, Deligne85}.  Furthermore, the arguments in \cite{ChenevierLannes19, ChenevierTaibi20} do not prove finiteness for weights greater than 25. 
\end{remark}

The predictions coming from Theorem~\ref{thm:CL} are consistent with the few previously known classification results for varieties that are smooth and proper over $\Spec \ZZ$. In particular, since positive genus curves and nontrivial abelian varieties have nontrivial $H^1$, these cohomological predictions agree with the theorem of Fontaine that there are no nontrivial abelian varieties over $\Spec \ZZ$, and hence the only smooth projective curve over $\Spec \ZZ$ is $\PP^1$ \cite{Fontaine85}.

\begin{remark}
The classification of smooth projective surfaces over $\ZZ$ is an open problem; it is not known whether there is any such surface of non-negative Kodaira dimension.  For further discussion of this intriguing question, and some partial results, see \cite{Khare07, Schroer23, BMP26}.
\end{remark}

The topological prediction that $H^k(\Mbar_{g,n}) = 0$ for odd $k < 11$ is proved in \cite{BFP24}. At present, all of the known proofs use algebraic geometry in an essential way. In particular, the proofs follow the inductive method of Arbarello and Cornalba \cite{ArbarelloCornalba98}, which relies on mixed Hodge theory and the yoga of weights. Moreover, the base cases needed to run the induction also seem out of reach by topological methods; they are proved using point counts over finite fields, the Grothendieck--Lefschetz trace formula, and \'etale-to-singular comparison theorems, as in \cite{BFP24}, or using the Chow--K\"unneth generation property, as in \cite{CanningLarson22, CLP24}. %

The prediction that $H^k(\Mbar_{g,n})$ is pure Tate for even $k \leq 20$ is confirmed for $k \leq 14$ \cite{CLP24}, but the cases 16, 18, and 20 remain open. %

\section{Further results and predictions} \label{sec:further-results}

The predictions of Chenevier--Lannes, and the methods used to prove these predictions for $H^k(\Mbar_{g,n})$, for $k \leq 15$, also shed light on the following stronger forms of the Hodge conjecture and Tate conjecture.

\begin{proof}[{\bf Sketch of proof of Theorems~\ref{thm:b} and \ref{thm:c}}]
In even degrees, if $H^{2k}(\Mbar_{g,n})$ is tautological, then it is supported on a basis of tautological cycles of codimension $k$, which confirms the generalized Hodge and Tate conjectures. In particular, the even degree cases of Theorem~\ref{thm:b} follow immediately from the proof of Theorem~\ref{thm:a}. 

We now consider the odd degree cases. For odd $k < 11$, the conjectures are vacuous, since $H^k(\Mbar_{g,n})$ vanishes, as predicted by Chenevier and Lannes (Theorem~\ref{thm:CL}) and proved unconditionally in \cite{BFP24}.

For $k = 11$, Theorem~\ref{thm:CL} predicts that $H^{11}(\Mbar_{g,n})$ should be a direct sum of copies of $\s_{12}$. In particular, we expect $H^{11}(\Mbar_{g,n}, \CC) \cong H^{11,0}\oplus H^{0,11}$, and no Galois subrepresentation of $H^{11}((\Mbar_{g,n})_{L^s}, \QQ_\ell)$ has positive Tate coniveau, unless $L$ has characteristic $p > 0$ and $p | \tau(p)$.  These expectations are proved unconditionally in \cite{CanningLarsonPayne23}, where it is shown that  
$H^{11}(\Mbar_{g,n})$ vanishes unless $g = 1$ and $n \geq 11$, and 
\begin{equation}\label{eq:11}
H^{11}(\Mbar_{1,n}) \cong \bigoplus_{1 \in A, |A| = 11} \pi_A^* H^{11}(\Mbar_{1,11})
\end{equation}
for $n > 11$. Here, $\pi_A$ denotes the tautological morphism obtained by forgetting all marked points that are not in a specified subset $A \subset \{1, \ldots, n\}$. The direct sum decomposition \eqref{eq:11}, together with the semisimplicity and Tate-coniveau-zero property of $H^{11}((\Mbar_{1,11})_{L^s}, \QQ_\ell)$, proves this case of the semisimplicity and generalized Tate conjecture.

The proof for $k = 13$ is similar, in the sense that we have a complete understanding of $H^{13}(\Mbar_{g,n})$ for all $g$ and $n$, as codimension-one Gysin push-forwards of copies of $\s_{12}$  \cite[Theorem~1.7]{CLPW24}.  For $g \geq 1$, and for $A \subset \{1, \ldots, n\}$ such that $2g-3+|A^c| > 0$, let $\xi_A$ be the tautological morphism
\[
\xi_A \colon \Mbar_{1, A \cup \{p \}} \times \Mbar_{g-1, A^c \cup \{q \}} \to \Mbar_{g,n}
\]
obtained by gluing the marked points $p$ and $q$.  We then have the Gysin push-forward
\[
\xi_{A*} \colon H^{11}\big(\Mbar_{1, A \cup \{p \}} \times \Mbar_{g-1, A^c \cup \{q \}}\big) \to H^{13}(\Mbar_{g,n}).
\]
By the K\"unneth formula and the vanishing of $H^k(\Mbar_{g,n})$ for odd $k < 11$, we have
\[
H^{11}\big(\Mbar_{1, A \cup \{p \}} \times \Mbar_{g-1, A^c \cup \{q \}}\big) \cong H^{11}(\Mbar_{1, A \cup \{p \}}) \oplus H^{11}(\Mbar_{g-1, A^c \cup \{q\}}).
\]
By \cite[Theorem~1.7]{CLPW24}, the sum of Gysin push-forwards $\bigoplus_A \xi_{A*}$ is surjective, and is an isomorphism unless $g = 1$.  (For $g = 1$, the kernel is also completely understood; see \cite[Section~4.4]{CLPW24}.) In any case, from the surjectivity of $\bigoplus_{A} \xi_{A*}$, we immediately see that $H^{13}(\Mbar_{g,n})$ is supported on a codimension-one closed subset, namely the union of the boundary divisors that are the images of the gluing maps $\xi_A$.  We also see that the Hodge type is $$H^{13}(\Mbar_{g,n}, \CC) = H^{12,1} \oplus H^{1,12}.$$ Similarly, the Galois representation $H^{13}((\Mbar_{g,n})_{L^s}, \QQ_\ell)$ is a quotient of a direct sum of Tate twists of $\s_{12}$. With the additional assumption that either $\ch(L) = 0$ or $\ch(L) = p$ and $p \nmid \tau(p)$,  it follows that every nontrivial Galois subrepresentation $V$ has Tate coniveau exactly 1. Since each such $V$ is supported on the boundary divisors in the image of the maps $\xi_A$ as above, the generalized Tate conjecture for $H^{13}((\Mbar_{g,n})_{L^s}, \QQ_\ell)$ is confirmed.  Moreover, the semisimplicity of $H^{13}((\Mbar_{g,n})_{L^s}, \QQ_\ell)$ follows from the surjectivity of $\bigoplus_A \xi_{A*}$ and the semisimplicity of $H^{11}((\Mbar_{1,11})_{L^s}, \QQ_\ell)$.  

It remains to discuss $H^{2d-j}$ for odd $j \leq 15$.  By  Poincar\'e duality and \cite{BFP24}, these groups vanish for odd $j < 11$. For $j = 11$ and $13$, the relevant classes lie in the semi-tautological extension generated by $H^{11}(\Mbar_{1,11})$ \cite[Theorem~1.6]{CLP24}, which means that these classes are obtained from boundary strata supporting $\s_{12}$, via Gysing push-forward and cup product with Chern classes of tautological bundles. By representing the Chern classes of the restrictions of these bundles by algebraic cycles on the boundary strata, we see that the classes are supported on closed algebraic subsets of the predicted codimension. The case $j=15$ is similar, but now the semi-tautological extension also includes $H^{15}(\Mbar_{1,15})$, and hence $\s_{16}$ \cite[Theorem~1.7]{CLP24}.
\end{proof}

Up to this point, we have discussed predictions and results regarding the images of cycle class maps and the support of Hodge and Galois substructures. We now turn to a complementary question: the expected structure of the kernel of the cycle class map. There is another rich web of conjectures regarding kernels of cycle class maps and filtrations on Chow groups (see, e.g., \cite{Jannsen94, MurreNagelPeters13}), which lead to interesting predictions about algebraic cycles on moduli spaces of curves. %

Recall that for a smooth complex projective variety (or smooth and proper Deligne--Mumford stack $X$ with projective coarse moduli space), we write $Z^k(X)$ for the group of codimension $k$ algebraic cycles on $X$.  The codimension $k$ Chow group $A^k(X)$ is the quotient of $Z^k(X)$ by the subgroup of cycles rationally equivalent to zero. Let $A^k(X)_\QQ := A^k(X) \otimes \QQ$. Then Bloch and Beilinson conjecture the existence of a decreasing filtration
\[
A^k(X)_\QQ = F^0A^k(X)_\QQ \supset F^1 A^k(X)_\QQ \supset \cdots \supset F^{k+1}A^k(X)_\QQ = 0
\]
that is stable under correspondences, with the following properties:
\begin{enumerate}
    \item The kernel of the cycle class map $\cyc^k \colon A^k(X)_\QQ \to H^{2k}(X, \QQ)$ is $F^1 A^k(X)_\QQ$.
    \item If $H^{p,q}(X) = 0$ for all $p + q = 2k - i$ and $q \leq k - i$, then $F^i A_k(X)_\QQ = 0$.
\end{enumerate}
See \cite[Conjecture~11.21]{Voisin03}.  

For $\Mbar_{g,n}$, recent results show that $H^{p,q} = 0$ for $p + q \leq 10$ and $q < p$ \cite{BFP24, CLP24}. This leads to the following prediction.

\begin{Chow5}
The rational cycle class map $A^k(\Mbar_{g,n})_\QQ \to H^{2k}(\Mbar_{g,n})$ is an isomorphism for $k \leq 5$, and for all $g$ and $n$.
\end{Chow5}

\noindent This is trivial for $k = 0$, and for $k = 1$ it follows from \cite{ArbarelloCornalba98}.  As further evidence, we note that both the rational Chow groups $A^k(\Mbar_{g,n})_\QQ$ and the singular cohomology groups $H^{2k}(\Mbar_{g,n})$ are generated by tautological classes for many small values of $g$ and $n$ \cite{CanningLarson22}. Through the work of Pixton \cite{Pixton13}, it is widely believed that the subring of $A^*(\Mbar_{g,n})_\QQ$ generated by tautological classes maps isomorphically onto its image in $H^*(\Mbar_{g,n})$ (cf. \cite[Conjecture~1]{Canning25}), but this is not known in general. In addition, $A^2(\Mbar_{g,n})_\QQ$ is generated by tautological cycles when the coarse space $\overline{M}_{g,n}$ is rationally connected \cite[Theorem~1.4]{Lu25}.

Of course, the Chow groups of $\Mbar_{g,n}$ are not generated by tautological classes in general. By work of Jannsen \cite[Theorem~3.5 and Corollary~3.7]{Jannsen94}, the kernel of the cycle class map on $0$-cycles is not finite-dimensional, in the sense of \cite{Mumford69}, when $\Mbar_{g,n}$ has a nontrivial holomorphic $p$-form, for some $p \geq 2$. The moduli spaces of stable curves have no holomorphic $p$-forms for $p < 11$, by \cite{CLPW24}, and hence the non-representable Chow groups that are produced in this way are for cycles of codimension 11 and higher.  All other known constructions of non-tautological algebraic cycles on $\Mbar_{g,n}$, such as \cite{GraberPandharipande03, vanZelm18, ACCHLMT25}, also produce cycles of codimension 11 and higher. We propose the following optimistic conjecture.

\begin{Chow10}
The Chow group $A^k(\Mbar_{g,n})_\QQ$ is generated by tautological cycles for $k \leq 10$.
\end{Chow10}

\noindent Combining the Chow 10 conjecture, the taut 20 conjecture, and the expectation that the tautological subring of Chow maps isomorphically onto its image in cohomology, we arrive at the prediction that the cycle class map
\[
\cyc^k\colon A^k(\Mbar_{g,n})_\QQ \to H^{2k}(\Mbar_{g,n})_\QQ
\]
is an isomorphism for $k \leq 10$. These conjectures are open even in the first nontrivial case, for $\Mbar_{1,11}$.

\section{Future outlook} \label{sec:outlook}

We now discuss some of the natural next steps toward proving the conjectures stated in this article and deepening our understanding of the Hodge and Tate conjectures for moduli spaces of curves, roughly in increasing order of expected difficulty.

For even degrees, we have seen that the generalized Hodge and Tate conjectures hold for $H^k(\Mbar_{g,n})$ for even $k \leq 6$, except for $k = 6$ and $g \in \{7, 8, 9\}$.  

\begin{problem}
Show that $H^6(\Mbar_{g,n})$ is tautological for all $g$ and $n$.
\end{problem}

\noindent Once we know that $H^6(\Mbar_{g,n})$ is tautological for all $g$ and $n$, it will follow by the Arbarello--Cornalba induction that $H^8(\Mbar_{g,n})$ is tautological for all but a small finite list of $g$, and for all $n$.

For odd degrees, we have seen that the generalized Hodge conjecture holds for $\Mbar_{g,n}$ in all odd degrees less than 15, and that the generalized Tate conjecture holds in those cases under the additional ordinarity hypotheses in Theorem~\ref{thm:c}. By \cite[Theorems 1.1 and 1.7]{CLP24}, we also know that each irreducible motivic structure that is a subquotient of $H^{15}(\Mbar_{g,n})$ is isomorphic to either $\s_{16}$ or $\mathsf{L}^2 \s_{12}$, as predicted by Theorem~\ref{thm:CL}. Moreover, the multiplicity of $\s_{16}$ is zero unless $g =1$ and $n \geq 15$ \cite[Theorem~1.3]{CLPW25}.

\begin{problem}
Prove the generalized Hodge conjecture for $H^{15}(\Mbar_{g,n})$.
\end{problem}

\noindent It would suffice to show that every occurrence of $\mathsf{L}^2\s_{12}$ comes from a codimension 2 push-forward of a degree 11 cohomology class on a product of smaller moduli spaces.

New phenomena arise in higher cohomological degrees. For instance, the irreducible $\ell$-adic Galois representation attached to $\s_{18}:= W_{17}H^{17}(\cM_{1,17})$ appears in the middle cohomology of both $\Mbar_{1,17}$ and $\Mbar_{2,14}$. This is the first instance of a general phenomenon, related to Saito--Kurokawa lifts of modular forms. The cohomology of $\Mbar_{1,n}$ is related to modular forms for $\SL_2(\ZZ)$ by the Eichler--Shimura correspondence, and the cohomology of $\Mbar_{2,n}$ is related to the cohomology of local systems on $\cA_2$, which are in turn related to Siegel modular forms.  Some Siegel modular forms arise as Saito--Kurokawa lifts of modular forms from $\SL_2(\ZZ)$ to $\Sp_4(\ZZ)$, and the corresponding $\ell$-adic Galois representations are isomorphic, without any clear geometric explanation.

Gu, Lee, and Prasanna have announced work in preparation showing that the rational Hodge structure on $W_{17} H^{17}(\cM_{2,14})$ agrees with that on $\s_{18}$ \cite{GLP26}. One expects similar agreement of Hodge structures in other instances where isomorphic Galois representations appear on different moduli spaces, but this is not known.

We know that every holomorphic 17-form on $\Mbar_{g,n}$ is pulled back, by forgetting marked points, from either $\Mbar_{1,17}$ or $\Mbar_{2,14}$ \cite{CLPW25b}. From Theorem~\ref{thm:CL}, we expect that all other irreducible Hodge structures appearing in degree 17 on $\Mbar_{g,n}$ should have positive Hodge coniveau. The following is therefore a natural first step toward proving the generalized Hodge conjecture for $H^{17}(\Mbar_{g,n})$.

\begin{problem}
Show that $H^{17}(\Mbar_{g,n})$ is supported on a divisor, for $g \geq 3$ and all $n$.
\end{problem}

Another natural next direction to explore is the generalized Tate conjecture for $H^{11}((\Mbar_{g,n})_{\overline \FF_p}, \QQ_\ell)$ for primes $p \neq \ell$ such that $p$ divides $\tau(p)$.  The set of such primes less than $10^{10}$ is
$\{ 2,\, 3,\, 5,\, 7,\, 2411,\, 7758337633\}$, and the multiplicity with which $p$ divides $\tau(p)$ in these cases is 1, except for
$\nu_2(\tau(2)) = 3$ and $\nu_3(\tau(3)) = 2$.

\begin{problem} \label{prob:non-ordinary}
Show that $H^{11}((\Mbar_{1,11})_{\overline {\FF}_p}, \QQ_\ell)$ is supported on a closed algebraic subset of codimension $\nu_p(\tau(p))$, for $p \in \{ 2,\, 3,\, 5,\, 7,\, 2411,\, 7758337633\}$.
\end{problem}

\noindent This problem illustrates a feature absent from the Hodge-theory side: in positive characteristic, the predicted coniveau can jump at nonordinary primes, and the support problem becomes sensitive to $p$-adic slopes of modular Galois representations. Nonordinary primes for higher cusp-form motives, such as $\s_{16}$, give further support problems analogous to Problem~\ref{prob:non-ordinary}.

Other difficult phenomena arise when one considers products of different moduli spaces of curves, such as $\Mbar_{1,17} \times \Mbar_{2,14}$, where isomorphic motivic structures appear independently, without geometric explanation. 

\begin{problem}
After choosing an isomorphism between the copies of $\s_{18}$ that appear in $H^{17}(\Mbar_{1,17})$ and $H^{17}(\Mbar_{2,14})$, respectively, produce an algebraic cycle of dimension 17 whose class spans the Tate subspace $$\mathsf{L}^{17} \cong \wedge^2 \s_{18}  \subset H^{34}(\Mbar_{1,17} \times \Mbar_{2,14}).$$
\end{problem}

\noindent The existence of such a cycle is predicted by the Hodge and Tate conjectures, but there is no known natural candidate.

One also expects a nontrivial Hodge class in middle degree on $\Mbar_{3,16}$, coming from a tensor product of two irreducible motivic structures isomorphic to $\s_{12}$ whose $\ell$-adic realizations are related by an Ikeda lift. Finding algebraic cycles that represent such classes (or proving their nonexistence!) would be most interesting.

\bibliographystyle{amsalpha}
\bibliography{HodgeTateSurvey}
\end{document}